\newtheorem{thm}{Theorem}[section]
\newtheorem*{main}{Main Theorem}
\newtheorem{cor}[thm]{Corollary}
\newtheorem{lema}[thm]{Lemma}
\theoremstyle{definition}
\newtheorem*{defi}{Definition}
\newtheorem{que}{Question}
\makeatletter\@addtoreset{equation}{section}\makeatother
\begin{document}

\author{Gaofei Zhang}
\address{Department of Mathematics, QuFu Normal University, Qufu  273165,   P. R. China}
\email{zhanggf@hotmail.com}

\title{Jordan mating is always possible for polynomials}

\begin{abstract}
Suppose $f$ and $g$ are two post-critically finite
polynomials of degree $d_1$ and $d_2$ respectively  and suppose     both of them have a finite super-attracting fixed point  of  degree $d_0$.    We prove  that one can always construct  a rational map $R$ of degree
 $$
 D = d_1 + d_2 - d_0
 $$ by gluing
 $f$ and $g$   along the Jordan curve boundaries  of the  immediate super-attracting basins. The result can be used to construct many rational maps with interesting dynamics.

\end{abstract}

\subjclass[2010]{Primary: 37F45; Secondary: 37F10, 37F30}

\keywords{}

\date{\today}



\maketitle


\section{Introduction}

Polynomial mating was an operation  proposed by Douady and Hubbard to understand  the  dynamics of rational maps.  Very roughly speaking, for two post-critically finite polynomials $P$ and $Q$ of degree $d \ge 2$ with both the Julia sets being connected, we may glue  $f$ and $g$ along the Julia sets to get a topological map $F$.
  We say $f$ and $g$  are   matable if
$F$ is a branched covering map of the two sphere to itself,  and moreover,   $F$ is topologically conjugate to some rational map.
Noting that the Julia set is the boundary of the immediate super-attracting basin of the infinity, the idea can be naturally extended  to the situation of rational maps. Suppose    $f$ and $g$  are two post-critically finite rational maps both of which have a simply connected
immediate super-attracting basin of degree $d_0 \ge 2$  such that
 there are no other critical orbits which intersect the immediate basins.  Then  one may construct  a topological map by
  gluing
   $f$ and $g$ along   the attracting basin boundaries  and  then copy this gluing for all the pre-images of the attracting basins.  As in the case of polynomial mating,     we  say $f$ and $g$ are matable if $F$ is a branched covering of the two sphere to itself,  and moreover, $F$ is topologically conjugate  to some rational map $G$.

A particularly important case is that
both the super-attracting basins are Jordan domains (Noting that all bounded immediate attracting basins of polynomials are Jordan domain \cite{PY}).    In this case,   no pinching happens when gluing $f$ and $g$ along the Jordan boundary and  the topological map is always a branched covering of the two sphere to itself.  Let us describe this topological construction as follows.  Let $D_f$ and $D_g$ denote the two Jordan super-attracting basins and $D_f^c, D_g^c$ be there complements respectively.  Let $\phi: D_f \to \Delta$ and $\psi: D_g \to \Delta$ be the holomorphic isomorphism which conjugate $f$ and $g$ to $z \mapsto z^{d_0}$. Then for each $1 \le k \le d_0-1$,
\begin{equation} \label{m} \Phi = \phi^{-1}  \bigg{(}\frac{e^{2 k \pi i/(d-1)}} {\psi}\bigg{)}: \partial D_g \to \partial D_f\end{equation}   is a homeomorphism which reverses  the orientation. We can extend it to a homeomorphism of the sphere so that it
 maps   $\overline{D_g}$ to  $D_f^c$ and maps $ D_g^c$ to   $\overline{D_f}$.  Now we glue     $D_g^c$  and $D_f^c$ by identifying the points $x$ and $\Phi(x)$. It is clear that $D_g^c \bigsqcup_{x \sim \Phi(x)} D_f^c$ is a topological two sphere. Define
$$
F:  D_g^c \bigsqcup_{x \sim \Phi(x)} D_f^c \to D_g^c \bigsqcup_{x \sim \Phi(x)} D_f^c
$$ by setting
 \begin{equation}\label{oo}
F(z)  =
\begin{cases}
f(z)  & \text{ for $z \in D_f^c$ and $f(z) \in D_f^c$}, \\
\Phi^{-1}\circ f(z)  & \text{ for $z \in D_f^c$ and $f(z) \in D_f$}, \\
g(z)  & \text{ for $z \in D_g^c$ and $f(z) \in D_g^c$}, \\
\Phi\circ g(z)  & \text{ for $z \in D_g^c$ and $f(z) \in D_g$}.
\end{cases}
\end{equation}

Since by assumption  no other critical orbits of $f$ and $g$
 enter into $D_f$ and $D_g$ respectively,
 the way of extending $\Phi: \partial D_g \to \partial D_f$  dose not affect the combinatorially equivalent class of $F$. In the case that $F$ has no Thurston obstruction, we have a rational map $G$ which is combinatorially equivalent to $F$ (One can actually prove that $G$ is topologically conjugate to $F$).    Unlike the usual mating, whose Julia sets is the disjoint union of   $J_f$ and $J_g$ with  those points in a ray equivalent class being identified,
 the Julia set of $G$ contains infinitely many copies of $J_f$ and $J_g$.  To get $J_G$,  one may start  from $J_g \bigsqcup_{x \sim \Phi(x)}  J_f$, and then  iteratively  fill the pre-images of $D_f$  and $D_g$ by   copies of $J_g$  and $J_f$     respectively.    To make a distinction with  the usual mating,  we call such mating a
  $\emph{Jordan mating}$.

 In contrast to the usual mating,  for which   there exist  cubic polynomials  which are topologically matable but not matable \cite{ST},    Jordan mating  is always possible for two  polynomials. We will actually prove a stronger result.
 \begin{defi}
 For $d_0 \ge 2$,
let   $\mathcal{R}_{d_0}$ denote the family of all post-critically finite rational maps which have  a  marked  immediate
 super-attracting  basin $D$    which is  a Jordan domain and of degree $d_0$ such that   all the other critical orbits do not intersect $D$.
 \end{defi}
\begin{main}
Let $d_0 \ge 2$.  Suppose $f, g \in \mathcal{R}_{d_0}$ such that at least one of them is a polynomial.
 Then $f$ and $g$  can be mated  into a rational map $R$ of degree $$D = d_1 + d_2 -d_0$$  with $d_1$ and $d_2$ being the degrees of $f$ and $g$ respectively. In particular, Jordan mating is always possible for   polynomials.
\end{main}

   Since the topological map $F$
   in our case is always a branched covering of the sphere to itself, all we need to do is to show that $F$ has no Thurston obstructions.   up to now there is no general way to check if a given topological map has Thurston obstructions or not, although many tools and ideas haven been developed\cite{BFH}\cite{Re}\cite{ST}\cite{Tan}\cite{T}.  The   idea of our   proof is to associate each non-peripheral curve a quantity which is monotonically increasing   as we iterate the topological map. This property will lead us to get a Levy cycle from an irreducible Thurston obstruction.  We then show that such a Levy cycle can be deformed into a Levy cycle of the rational map $g$, which is a contradiction. Our argument relies essentially on   the assumption that one of the two rational maps is a polynomial.

 \begin{que}
Is the Jordan mating  always  possible for      rational maps in $\mathcal{R}_{d_0}$?
\end{que}

\section{Examples}

 In this section we give two examples of Jordan mating.
 Let $f$  be a cubic polynomial   which has a degree two super-attracting fixed point at the origin    so that
the other finite critical point  $c$    belongs to the boundary of the super-attracting basin, and moreover,  $f^2(c) = f(c)$.
Let $g$  be a cubic polynomial   which has a a degree two super-attracting fixed point at the origin   so that
the other finite critical point   $c$   belongs to the boundary of the super-attracting basin, and moreover,  $g^3(c) = g^{2}(c) \ne g(c)$.
Let $h$ be a post-critically finite cubic rational map so that it has a degree two super-attracting fixed point at $\infty$  and the Julia set is a Sierpinski carpet.

\begin{figure}[!htpb]
  \setlength{\unitlength}{1mm}
  \begin{center}

 \includegraphics[width=70mm]{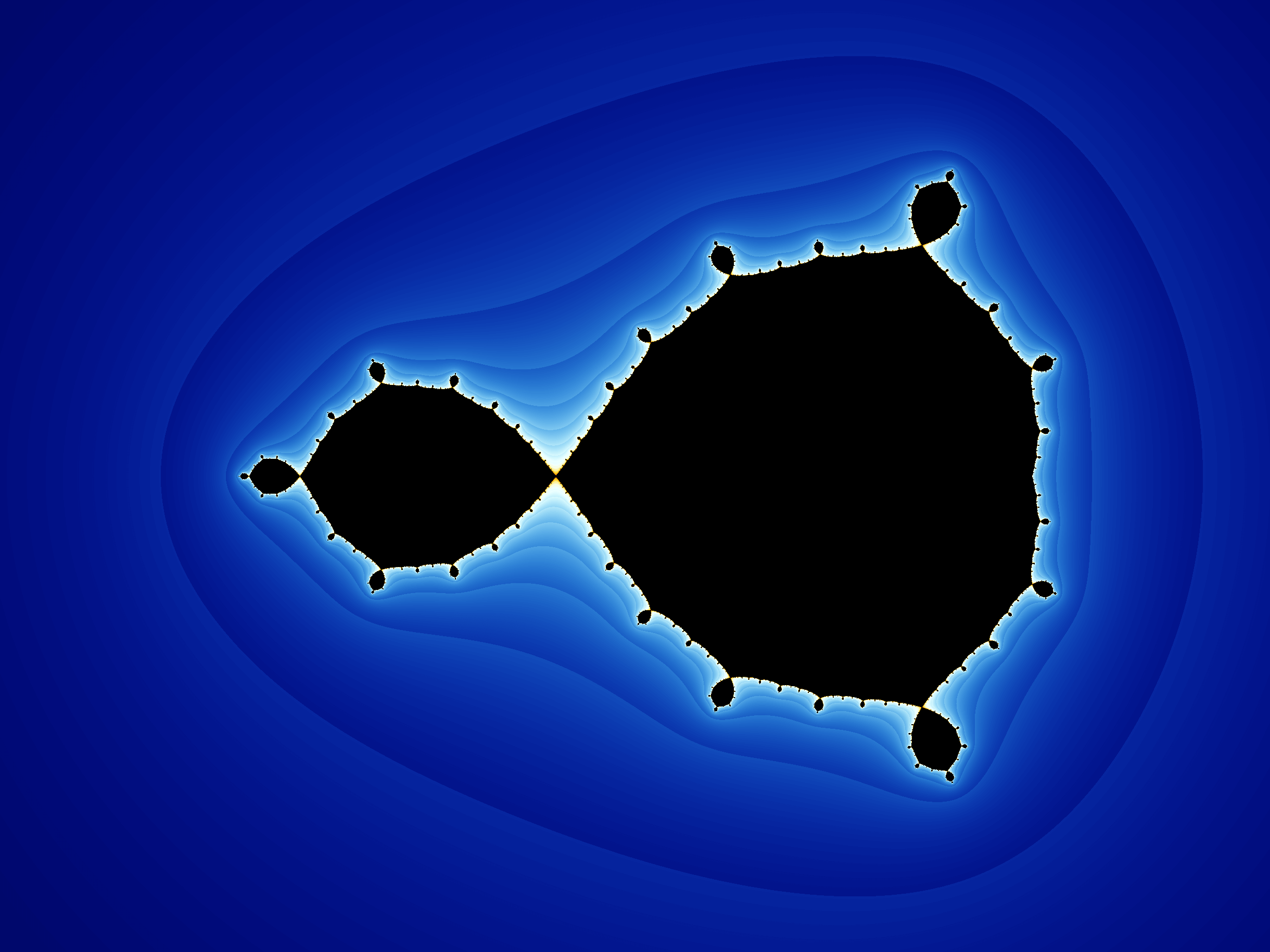}
  \caption{The Julia set for $f$}
  \label{Figure-1}

  \end{center}
  \end{figure}

  \begin{figure}[!htpb]
  \setlength{\unitlength}{1mm}
  \centering
  \includegraphics[width=70mm]{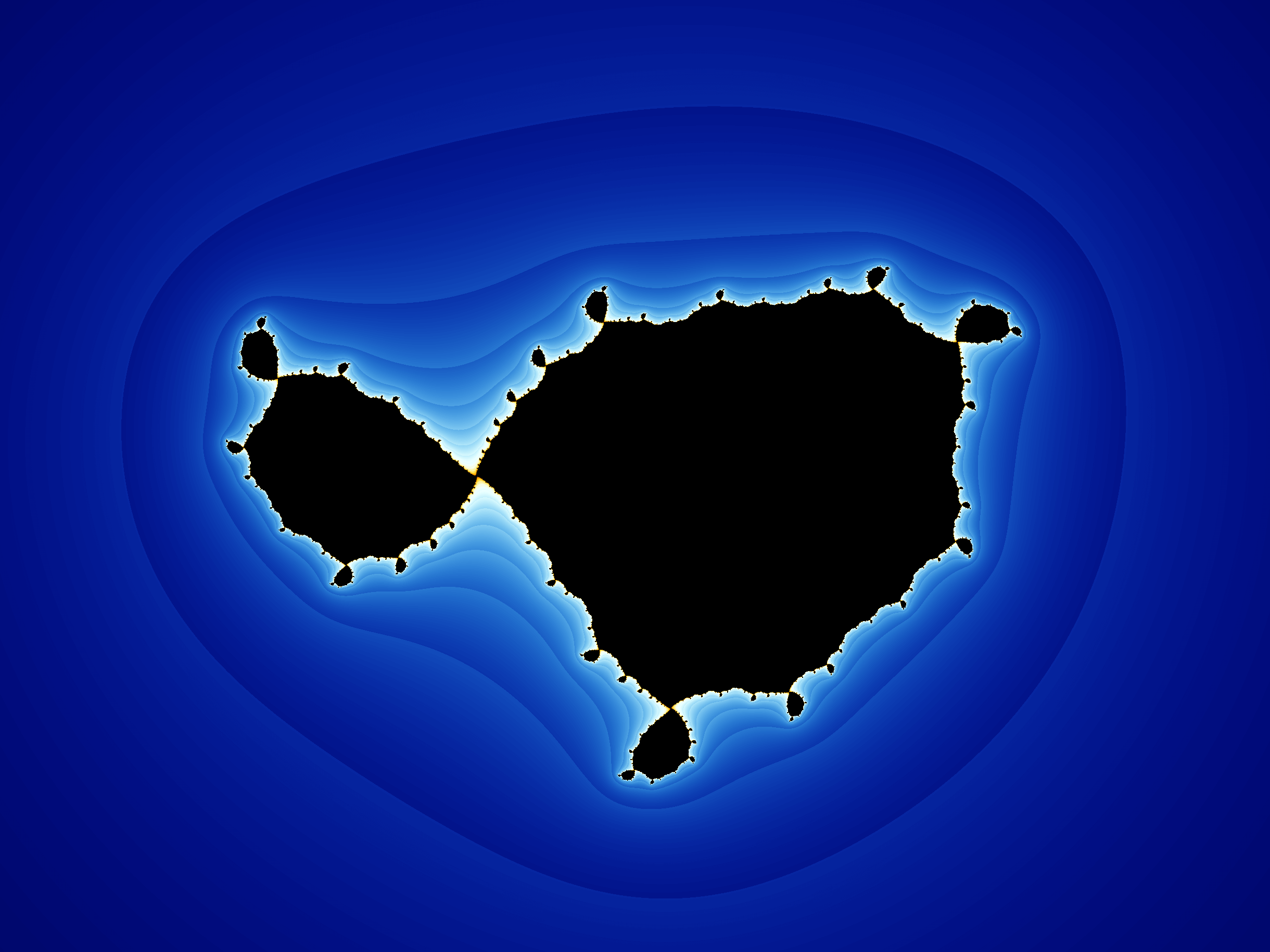}
  \caption{The Julia set for $g$}
  \label{Figure-1}
  \end{figure}

  \begin{figure}[!htpb]
  \setlength{\unitlength}{1mm}
  \begin{center}

 \includegraphics[width=70mm]{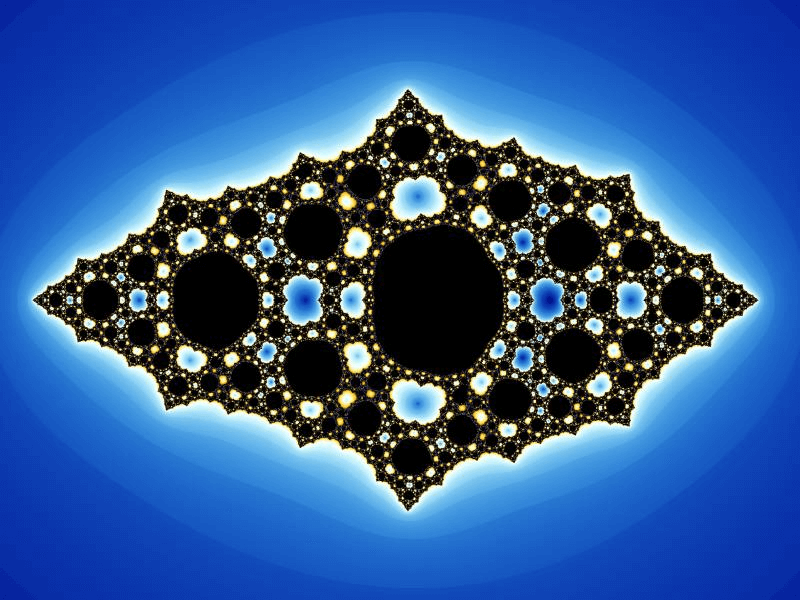}
  \caption{The Julia set for $h$}
  \label{Figure-1}

  \end{center}
  \end{figure}

  \begin{figure}[!htpb]
  \setlength{\unitlength}{1mm}
  \centering
  \includegraphics[width=70mm]{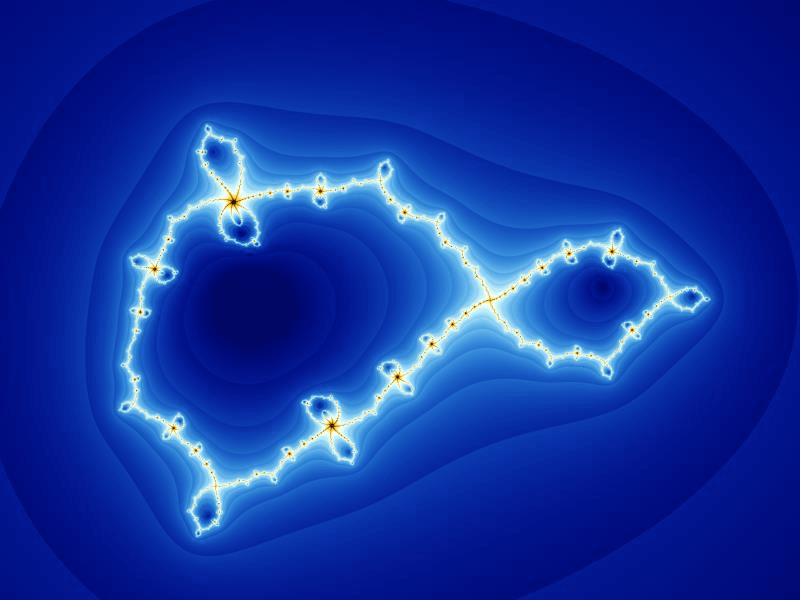}
  \caption{Jordan mating of $f$ and $g$}
  \label{Figure-1}
  \end{figure}

    \begin{figure}[!htpb]
  \setlength{\unitlength}{1mm}
  \begin{center}

 \includegraphics[width=70mm]{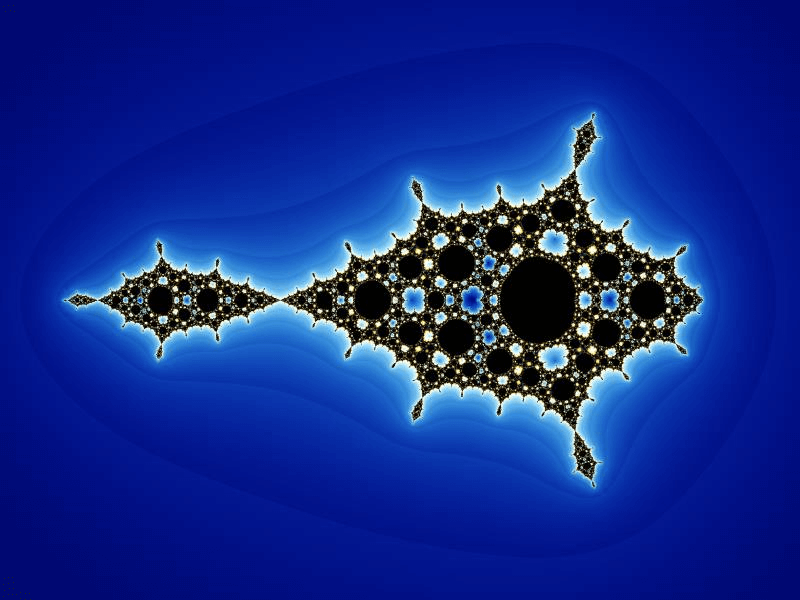}
  \caption{Jordan mating of  $f$ and $h$}
  \label{Figure-1}

  \end{center}
  \end{figure}

\section{Proof of the main theorem} The reader may refer  to \cite{DH} \cite{Pi} for the details of the Thurston's theory for characterization of post-critically finite rational maps. Throughout the paper we use $\widehat{\Bbb C}$, $\Bbb C$, $\Bbb C^*$,  $\Bbb T$ and  $\Bbb D$
to denote the Riemann sphere, the complex plane, the puncture complex plane, the unit circle and the unit disk respectively.
Assume that $f, g \in \mathcal{R}_{d_0}$  with $f$ being  a polynomial. Then there are $d_0 -1$ ways to glue $f$ and $g$ along the boundary of the marked attracting basin, see (\ref{m}).
Let $F$ be one of such topological maps.  All we need to do is to show that $F$ has no Thurston obstructions.  We may identify the boundaries  of the two  marked immediate attracting basins with   $\Bbb T$.    We  may  assume that  $F: \Bbb T \to \Bbb T$ is given by $z \mapsto z^{d_0}$, and    up to combinatorial equivalence,  $F = f$ outside $\Bbb T$ and $F = g$ inside $\Bbb T$.   When a post-critical point $x \in P_F$ belongs to  the forward orbit of some critical point of $f$, we  write $x \in P_f$, and similarly, if   it belongs to  the forward orbit of some critical point of $g$, we  write $x \in P_g$.    In particular, we have
$$
P_F  = P_f \cup P_g.
$$

Suppose $\gamma$ is a non-peripheral curve of  $F$.  In the following we only concern those $\gamma$ so that $\gamma \cap \Bbb T \ne \emptyset$.  By homotopy rel $P_F$ we may  assume that $\gamma \cap \Bbb T$ is a finite set.
Then  $\gamma - \Bbb T$  consists of finitely many  curve segments. Let $\sigma$ be any of such curve segments.
We call  $\sigma$  of $\emph{type P}$ (polynomial type)     if  it  is outside $\Bbb T$,  otherwise, we call it of $\emph{type R}$ (rational type). Let $I \subset \Bbb T$   be the arc so that $\partial I = \partial \sigma$ and $I$ is homotopic to $\sigma$ rel $\partial \sigma$ in $\Bbb C^*$. Let $D(\sigma)$ be the union of $I$ and
the bounded domain bounded by $\sigma$ and $I$.

\begin{figure}[!htpb]
  \setlength{\unitlength}{1mm}
  \centering
  \includegraphics[width=100mm]{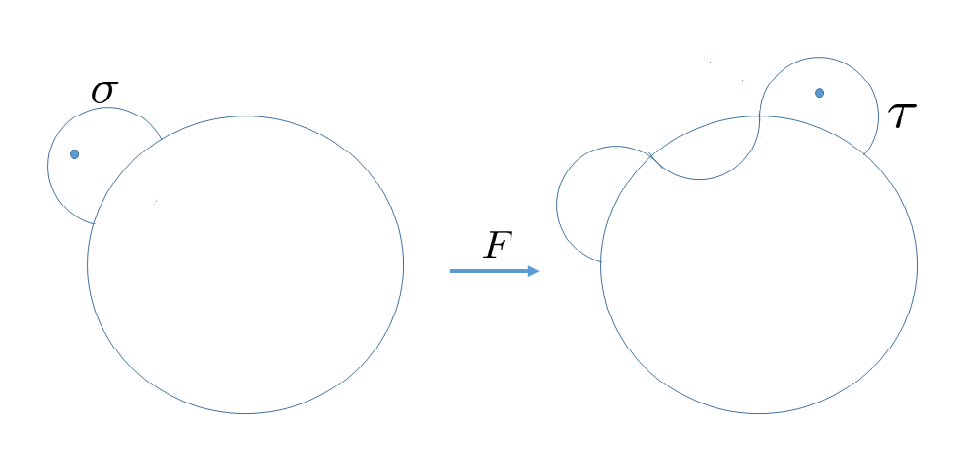}
  \caption{curve segments of polynomial type}
  \label{Figure-1}
  \end{figure}

\begin{lema}\label{ess}
Suppose $\gamma$ is non-peripheral curve in $\widehat{\Bbb C} - P_F$ and $\eta$ is a non-peripheral component of $F^{-1}(\gamma)$.  Suppose $\sigma$ is  a \emph{type P} curve segment of $\eta $ and
$x \in D(\sigma) \cap P_F$.  Then there is some \emph{type P} curve segment   $\tau$ of $\gamma$ such that
 $F(x) \in D(\tau)$.
\end{lema}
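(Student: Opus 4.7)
The plan is to show that $F(x)$ lies in the closed outside of $\T$ (the $f$-side of the glued sphere) and then to locate it inside a single disk $D(\tau)$ for some type P segment $\tau$ of $\gamma$.

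I first observe that $D(\sigma)$ lies in the closed outside of $\T$, so $x \in D(\sigma) \cap P_F$ belongs to $P_f \cup (P_g \cap \T)$. If $x \in P_f$, the hypothesis $f \in \mathcal{R}_{d_0}$---that the forward orbit of every critical point of $f$ other than the marked super-attracting fixed point never meets $D_f$---guarantees that $F(x) = f(x) \in P_f$ also avoids the open interior of $D_f$, so $F(x)$ lies outside $\T$ or on $\T$. If $x \in P_g \cap \T$, then $F(x) = x^{d_0} \in \T$ automatically. In either case $F(x)$ lies on the closed outside of $\T$.

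The case $F(x) \in \T$ is then straightforward: since $F(x) \in P_F$ avoids $\gamma$, it sits on an open arc $J$ of $\T \setminus (\gamma \cap \T)$, and the outside cell of $\EC \setminus (\gamma \cup \T)$ adjacent to $J$ is contained in $D(\tau)$ for a unique type P segment $\tau$ of $\gamma$; thus $F(x) \in I_\tau \subseteq D(\tau)$. When $F(x)$ is strictly outside $\T$, I use that $F|_{D(\sigma)}$ is an orientation-preserving branched covering (being $f$ on the interior and $z \mapsto z^{d_0}$ on $I$) whose boundary maps into $\gamma \cup \T$. Local orientation preservation sends the $D(\sigma)$-side of $\sigma$ (the side facing $\T$) to the $\T$-facing side of $F(\sigma) \subseteq \gamma$, forcing $F(x)$ to lie in an outside cell bordering both $\T$ and some type P segment of $\gamma$, hence in some $D(\tau)$.

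The main obstacle is the case where $F(\sigma)$ crosses $\T$ at interior points, so that $F(\sigma)$ comprises several curve segments of $\gamma$. This occurs exactly when $\sigma$ crosses preimages of $\T$ other than $\T$ itself---namely boundaries of pre-basin components of $D_f$. To deal with it I would subdivide $D(\sigma)$ along $\sigma \cap (F^{-1}(\T) \setminus \T)$ into sub-disks whose boundaries map into $\gamma \cup \T$ in a controlled way, and apply the argument above to the sub-disk containing $x$. The polynomial nature of $f$ together with the critical-orbit hypothesis is essential, as it guarantees the sub-disk containing $x$ does not map into the unbounded outside cell.
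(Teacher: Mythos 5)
Your step~1 (that $F(x)$ lies in the closed exterior of $\T$) is correct and matches the paper, and your remark at the end that the polynomial nature of $f$ is what rules out the unbounded complementary region is exactly the right instinct. But the argument you build between those two points does not close.

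The core issue is that your orientation argument is purely local to $\sigma$, whereas $x$ may lie deep in the interior of $D(\sigma)$, far from $\sigma$. Knowing that ``the $D(\sigma)$-side of $\sigma$ maps to the $\T$-facing side of $F(\sigma)$'' only controls images of points near $\sigma$; it says nothing about which complementary component of $\gamma\cup\T$ contains $F(x)$. Moreover, ``an outside cell bordering both $\T$ and some type~P segment of $\gamma$'' need not be contained in any $D(\tau)$: the unbounded cell (the one containing $\infty$) also borders $\T$ and type~P segments of $\gamma$, and it lies in no $D(\tau)$. Your proposed fix --- subdividing $D(\sigma)$ along $F^{-1}(\T)\setminus\T$ --- does not resolve this, because you still need an argument excluding the unbounded cell for the image of the sub-disk containing $x$, and you only assert this at the end without proof.

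The paper's argument is shorter and global, putting the polynomial hypothesis to work immediately: $D(\sigma)$ is a bounded subset of the closed exterior of $\T$, and since $f$ is a polynomial, $\infty$ has no preimage in $D(\sigma)$, so $F(D(\sigma))$ is bounded; its topological boundary lies in $F(\sigma)\cup F(I)\subset\gamma\cup\T$. Since $F(x)\in F(D(\sigma))$ avoids $\D$ and $\gamma$, the complementary region of $\gamma\cup\T$ containing $F(x)$ is trapped inside the bounded set $F(D(\sigma))$, and a bounded complementary region of $\gamma\cup\T$ in the exterior of the disk is automatically contained in $D(\tau)$ for some type~P segment $\tau$. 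This single global boundedness observation replaces all of your case analysis, subdivision, and orientation bookkeeping, and unlike those steps it actually pins down where $F(x)$ can land.
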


\begin{proof}
By assumption the orbit of the critical points of $F$ which belongs to $\widehat{\Bbb C} \setminus \Bbb D$ does not enter $\Bbb D$. So $F(x) \notin \Bbb D$. Since the action of $F$ on the outside of $\Bbb T$ is given by the polynomial $f$, the image of $D(\sigma)$ is bounded whose boundaries is a subset of the union of  $\Bbb T$ and finitely many curve segments of $\emph{type P}$ and $\emph{R}$ of $\gamma$.  Since $F(x) \notin \Bbb D$, it follows that there is some $\emph{type P}$
curve segment $\tau$  of $\gamma $ so that $F(x) \in D(\tau)$.
\end{proof}

For $x \in P_f$ and $\gamma$ a non-peripheral curve in $\widehat{\Bbb C} -  P_F$, let $\Sigma_x(\gamma)$ denote the set of all the $\emph{type P}$ curve segments  $\sigma$  of $\gamma$ so that $x \in D(\sigma)$. Let $N(\Sigma_x(\gamma))$ denote the number of the elements in $\Sigma_x(\gamma)$. Let
$$
N(\Sigma_x([\gamma])) = \min_{\gamma'} N(\Sigma_x(\gamma'))
$$ where $\min$ is taken over all non-peripheral curves $\gamma'$ which are homotopic to $\gamma$ in $\widehat{\Bbb C} - P_F$. We need more notations.
\begin{itemize}
\item Let $\mathcal{O}$ denote the set of periodic points in $P_f$.
\item Let $\Sigma$  denote the class of non-peripheral curves $\gamma$
 so that there is $x \in \mathcal{O}$ with $N(\Sigma_x([\gamma])) > 0$    and    let $\Pi$ denote the class of other non-peripheral curves.  \item Let $\Lambda$  denote the class of non-peripheral curves $\gamma$ so that $\Sigma_x([\gamma]) = 0 $
  holds for any $x \in P_f$.
  \end{itemize}

\begin{lema}\label{ev}
There is some $n$ large such that for any $\gamma \in \Pi$,  if $\eta$ is a non-peripheral component of $F^{-n}(\gamma)$, then   $\eta \in \Lambda$.
\end{lema}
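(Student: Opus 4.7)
The plan is to argue by contradiction, exploiting the fact that $P_f$ is a finite, $F$-forward-invariant set of preperiodic points: because every point of $P_f$ eventually enters the periodic cycle $\mathcal{O} \subset P_f$, I can pick a single $n$ so large that $F^n(P_f) \subseteq \mathcal{O}$.

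Fix this $n$, take any $\gamma \in \Pi$ and any non-peripheral component $\eta$ of $F^{-n}(\gamma)$, and suppose toward a contradiction that $\eta \notin \Lambda$. Then some $x \in P_f$ satisfies $N(\Sigma_x([\eta])) > 0$. Set $y := F^n(x) \in \mathcal{O}$. The goal is to show $N(\Sigma_y([\gamma])) > 0$, which would place $\gamma$ in $\Sigma$ and contradict $\gamma \in \Pi$.

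To establish this, let $\gamma'$ be any representative of $[\gamma]$ rel $P_F$. Standard lifting of isotopies through branched coverings produces a non-peripheral component $\eta'_0$ of $F^{-n}(\gamma')$ isotopic to $\eta$ rel $P_F$. Setting $\eta'_k := F^k(\eta'_0)$ so that $\eta'_n = \gamma'$, the defining minimum gives $N(\Sigma_x(\eta'_0)) \ge N(\Sigma_x([\eta])) > 0$, so $\eta'_0$ carries a type P segment $\sigma_0$ with $x \in D(\sigma_0)$. Applying Lemma~\ref{ess} to the pair $(\eta'_1, \eta'_0)$ yields a type P segment $\sigma_1$ of $\eta'_1$ with $F(x) \in D(\sigma_1)$; iterating $n$ times produces a type P segment $\sigma_n$ of $\gamma'$ with $y = F^n(x) \in D(\sigma_n)$. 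Hence $N(\Sigma_y(\gamma')) > 0$ for every representative $\gamma'$, so $N(\Sigma_y([\gamma])) > 0$, as desired.

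The one subtlety I anticipate is that the intermediate curves $\eta'_k$ for $0 < k < n$ need not be non-peripheral, whereas Lemma~\ref{ess} is stated under non-peripheral hypotheses. However, inspection of its proof shows non-peripherality is never used: the only ingredients are that $F(x) \notin \mathbb{D}$ (because the critical orbits of $f$ outside $\mathbb{D}$ stay outside) and that $F$ acts as the polynomial $f$ outside $\mathbb{T}$, both of which persist along the whole chain. With this mild generalization of Lemma~\ref{ess} in hand, the proof reduces to the finite forward chase of $x$ into $\mathcal{O}$ described above.
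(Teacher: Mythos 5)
Your argument is correct and follows the paper's proof in its essentials: choose $n$ so that $F^n(P_f)\subseteq\mathcal{O}$, suppose $\eta\notin\Lambda$, and push a type P segment witnessing this forward $n$ times via Lemma~\ref{ess} to land a type P segment around a point of $\mathcal{O}$ on $\gamma$, contradicting $\gamma\in\Pi$. You add two careful touches the paper elides: you work with an arbitrary representative $\gamma'$ of $[\gamma]$ and lift it, so the conclusion genuinely yields $N(\Sigma_y([\gamma]))>0$ (the minimum over the class) rather than just a count for one curve; and you correctly flag that the intermediate curves $\eta'_k$ need not be non-peripheral, observing that the proof of Lemma~\ref{ess} never uses non-peripherality — both are legitimate small gaps in the paper's write-up that your version closes.
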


\begin{proof} Since every critical point of $f$  is eventually periodic,
we have an integer  $n \ge 1$   such that   $f^n(x) \in \mathcal{O}$ for all  $x \in P_f$.
  Take an arbitrary $\gamma \in \Pi$ and  let  $\eta$ be
 a non-peripheral component of $F^{-n}(\gamma)$.  If $\eta \notin \Lambda$, then there would be a $\emph{type P}$ curve segment  of $\eta$, say $\sigma$,
  such that $D(\sigma) \cap P_f \ne \emptyset$. By applying Lemma~\ref{ess} $n$ times,
   we would have some $\emph{type P}$ curve segment of $\gamma$, say $\tau$,
   such that $D(\tau) \cap \mathcal{O} \ne \emptyset$. This implies that $\gamma \in \Sigma$ which contradicts the assumption that $\gamma \in \Pi$.

   \end{proof}

 Now suppose $F$ has an obstruction.   Then by \cite{Pi} $F$ has a  canonical Thurston, say $\Gamma$, which consists of  all homotopy classes of the
 non-peripheral curves whose length go to zero as we iterate the Thurston pull back induced by $F$.   We claim that  $\Gamma \cap \Lambda = \emptyset$.  Let us prove the claim. Suppose  $\Gamma \cap \Lambda \ne \emptyset$. Then
   $\Gamma \cap \Lambda$ must be  $F$-stable by Lemma~\ref{ess}.  Since the length of every
    curve  in $\Gamma \cap \Lambda$ goes to zero as we iterate the Thurston pull back, the transformation matrix   associated to $\Gamma \cap \Lambda$ must have an eigenvalue $\ge 1$.   By the definition of $\Lambda$,  one can deform the curves in  $\Gamma \cap \Lambda$  so that   it is a stable family of $g$ and with the same transformation matrix.  This is a contradiction because $g$ has no obstruction.   This,  together with Lemma~\ref{ev}, implies that $\Gamma \cap \Pi = \emptyset$.  We thus have

   \begin{lema}\label{short}
  If there is an obstruction for $F$, then there must be  one    which  consists  of curves in $\Sigma$  whose length go to zero as we iterate the Thurston pull back induced by $F$.
   \end{lema}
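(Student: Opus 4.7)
The plan is to invoke Pilgrim's canonical Thurston obstruction theorem from \cite{Pi}. Assuming $F$ carries an obstruction, it produces a distinguished $F$-stable multicurve $\Gamma$ whose elements are exactly those homotopy classes of non-peripheral curves on which the Thurston pullback operator contracts extremal length to zero. In particular $\Gamma$ already satisfies the shrinkage condition demanded by the lemma, so it suffices to prove $\Gamma \subset \Sigma$; equivalently, that $\Gamma$ misses both $\Lambda$ and the rest of $\Pi$.

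First I would rule out $\Gamma \cap \Lambda \ne \emptyset$. Lemma~\ref{ess} tells us that if a downstairs curve $\gamma$ belongs to $\Lambda$, then no type-P segment of any non-peripheral component of $F^{-1}(\gamma)$ can contain a point of $P_f$; i.e., preimages stay in $\Lambda$. Combined with the $F$-stability of $\Gamma$, this makes $\Gamma \cap \Lambda$ itself $F$-stable, and the shrinkage property forces its transition matrix to have spectral radius $\ge 1$. But the defining property of $\Lambda$ means that each such curve, together with its type-P segments, can be homotoped rel $P_F$ across $\Bbb T$ into the $g$-hemisphere without crossing $P_f$. The resulting family, viewed in $\widehat{\Bbb C} \setminus P_g$, is stable under the $g$-pullback with the same transition matrix. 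This would give a Thurston obstruction for $g$, which is excluded since $g \in \mathcal{R}_{d_0}$ is a genuine post-critically finite rational map.

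With $\Gamma \cap \Lambda = \emptyset$ in hand, the elimination of $\Gamma \cap \Pi$ is immediate from Lemma~\ref{ev}. Let $n$ be the uniform iterate furnished there. If $\gamma \in \Gamma \cap \Pi$, every non-peripheral component $\eta$ of $F^{-n}(\gamma)$ lies in $\Lambda$; but the $F$-stability of $\Gamma$ applied $n$ times gives $[\eta] \in \Gamma$, contradicting the previous paragraph. Hence $\Gamma \cap \Pi = \emptyset$ and so $\Gamma \subset \Sigma$, which is exactly the obstruction the lemma asks for.

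The main obstacle I anticipate is the deformation step in the second paragraph, namely making precise that an $F$-stable subfamily of $\Lambda$ really does map onto a $g$-stable family with identical transition matrix. One must check that the push-across of type-P segments preserves non-peripherality in $\widehat{\Bbb C} \setminus P_g$ and preserves both the degree and the intersection-number data that enter the Thurston matrix. This is where the hypothesis that one of the two maps is a polynomial enters crucially: it forces the post-critical set of $f$ outside the marked basin to sit in a combinatorially simple position relative to $\Bbb T$, which is what keeps the deformation from creating new crossings or collapsing an essential curve.
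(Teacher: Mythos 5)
Your argument follows the paper's own proof essentially line by line: invoke Pilgrim's canonical obstruction $\Gamma$, use Lemma~\ref{ess} together with $F$-stability of $\Gamma$ to conclude $\Gamma\cap\Lambda$ is $F$-stable, deform that subfamily into the $g$-side to contradict the non-existence of a $g$-obstruction, and then feed $\Gamma\cap\Lambda=\emptyset$ into Lemma~\ref{ev} to kill $\Gamma\cap\Pi$. The one thing you add beyond the paper is the explicit acknowledgement at the end that the push-across-$\Bbb T$ deformation (which the paper dismisses with ``one can deform'') is the real technical content and deserves a careful proof that non-peripherality and the transition matrix entries are preserved; that is an accurate and useful caveat, but it does not change the route, so this is the same proof.
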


\begin{lema}\label{o-o}
Let $\gamma$ be a non-peripheral curve.  Let $x \in P_f$. Then
$$
\sum_{\eta}N(\Sigma_x(\eta))  \le N(\Sigma_{f(x)}(\gamma))
$$ where the sum is taken over all the non-peripheral components of $F^{-1}(\gamma)$. In particular,
$$
\sum_{\eta}N(\Sigma_x([\eta]))  \le N(\Sigma_{f(x)}([\gamma])).
$$
\end{lema}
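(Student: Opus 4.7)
The plan is to use a path-lifting argument that relies essentially on $f$ being a polynomial, so that $\infty$ is a totally ramified super-attracting fixed point of $f$ lying outside $\Bbb T$.

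First, I would choose a generic simple path $\beta:[0,1]\to\widehat{\Bbb C}$ from $f(x)$ to $\infty$ that lies entirely in the open exterior of $\Bbb T$, meets $\gamma$ transversely, stays disjoint from $P_F$ except at its endpoints, and is in minimal position with respect to $\gamma$, so that $|\beta\cap\gamma|=N(\Sigma_{f(x)}(\gamma))$. Such a minimum is attainable because the type P segments of $\gamma$ form pairwise disjoint arcs with endpoints on $\Bbb T$ inside the topological closed disk outside $\Bbb T$, organized into a nested tree whose depth at $f(x)$ is exactly $N(\Sigma_{f(x)}(\gamma))$. Then lift $\beta$ under $f$: since $\beta$ avoids the finite critical values of $f$ (all contained in $P_f$) except at the endpoint $\infty$, and since $f^{-1}(\infty)=\{\infty\}$ as $f$ is a polynomial, the lift $\alpha$ starting at $x$ is uniquely defined on $[0,1)$ and extends continuously to $\alpha(1)=\infty$.

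Forward invariance of the super-attracting basin $D_f$ under $f$ yields $f^{-1}(\widehat{\Bbb C}\setminus\overline{D_f})\subset\widehat{\Bbb C}\setminus\overline{D_f}$, so $\alpha$ remains strictly outside $\Bbb T$. On this region $F$ agrees with $f$, hence $F\circ\alpha=\beta$, and therefore
$$
|\alpha\cap F^{-1}(\gamma)|=|\beta\cap\gamma|=N(\Sigma_{f(x)}(\gamma)).
$$
Now, for any type P segment $\sigma$ in any component $\eta$ of $F^{-1}(\gamma)$ with $x\in D(\sigma)$, the path $\alpha$ runs from $x\in D(\sigma)$ to $\infty\notin D(\sigma)$; since $\alpha$ never meets $\Bbb T$ it cannot exit $D(\sigma)$ through $I_\sigma\subset\Bbb T$, and so must cross $\sigma$ at least once. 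Since distinct type P segments are pairwise disjoint arcs, these crossings contribute distinct intersection points, so
$$
\sum_\eta N(\Sigma_x(\eta))\le|\alpha\cap F^{-1}(\gamma)|=N(\Sigma_{f(x)}(\gamma)),
$$
which gives the first inequality once we restrict the sum to non-peripheral components.

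For the in-particular inequality on homotopy classes, I would choose $\gamma_0$ homotopic to $\gamma$ in $\widehat{\Bbb C}\setminus P_F$ realizing $N(\Sigma_{f(x)}(\gamma_0))=N(\Sigma_{f(x)}([\gamma]))$; the non-peripheral components $\eta_0$ of $F^{-1}(\gamma_0)$ are homotopy representatives of the corresponding non-peripheral components $\eta$ of $F^{-1}(\gamma)$, so combining the first inequality applied to $\gamma_0$ with the trivial bound $N(\Sigma_x([\eta]))\le N(\Sigma_x(\eta_0))$ gives the result. The main subtle point is to arrange $\beta$ simultaneously generic and in minimal position with respect to $\gamma$, together with a clean treatment of the lift near the endpoint $\infty$; both are handled by standard transversality/perturbation and branch-point arguments.
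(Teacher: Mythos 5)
Your proof is correct, but it proceeds by a genuinely different mechanism than the paper's. The paper argues combinatorially: it puts the nesting total order on the set $\Sigma_x=\bigcup_\eta\Sigma_x(\eta)$ of type~P segments upstairs whose disks contain $x$, does the same downstairs for $\Sigma_{f(x)}(\gamma)$, and defines a map $M$ sending each $\sigma$ to the outermost element of $\Sigma_{f(x)}(\gamma)$ lying in $F(\sigma)\setminus\Bbb T$; the heart of the matter is the claim, asserted via the ``polynomial property'' that enlarging $D(\sigma)$ strictly enlarges its image, that $M$ is strictly order-preserving and hence injective, which yields the count. You instead encode both counts as intersection numbers of $\gamma$ (resp.\ $F^{-1}(\gamma)$) with a single well-chosen arc $\beta$ from $f(x)$ to $\infty$ and its lift $\alpha$, exploiting that $\infty$ is a totally ramified fixed point of $f$ lying in the exterior of $\Bbb T$, so the lift terminates at $\infty$ and stays outside $\Bbb T$; the identity $F\circ\alpha=\beta$ then transfers the count directly and sidesteps the order-theoretic analysis altogether. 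Both proofs use the polynomial hypothesis at essentially the same point (a totally invariant point at infinity in the exterior of $\Bbb T$), but your version isolates this hypothesis more transparently and replaces the paper's somewhat terse monotonicity claim with a standard path-lifting and intersection-counting argument. Two details deserve tightening: the claim $|\beta\cap\gamma|=N(\Sigma_{f(x)}(\gamma))$ should be justified by noting that each type~P segment $\sigma$ separates the closed topological disk $\widehat{\Bbb C}\setminus\Bbb D$ into two pieces, with $f(x)$ and $\infty$ on opposite sides precisely when $\sigma\in\Sigma_{f(x)}(\gamma)$, which forces at least one transversal crossing of each such $\sigma$ and permits a choice of $\beta$ crossing exactly those and exactly once; and the lift of $\beta$ through $x$ need not be unique when $x$ is a critical point of $f$, though any choice of lift suffices for your argument.
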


\begin{proof}

Let
$$
\Sigma_x = \bigcup_\eta \Sigma_x(\eta)
$$ where the union is taken over all the non-peripheral components of $F^{-1}(\gamma)$.
We may introduce an order in $\Sigma_x$: $\sigma < \sigma'$ if and only if
 $D(\sigma) \subset D(\sigma')$.   Similarly we   introduce an order in  $\Sigma_{f(x)}(\gamma)$ by setting $\tau < \tau'$ if and only if
 $D(\tau) \subset D(\tau')$.

  Now for each $\sigma \in \Sigma_x$, as in the proof of Lemma~\ref{ess}, $F(\sigma) - \Bbb T$ has at least one component  in $\Sigma_{f(x)}(\gamma)$. Let $M(\sigma)$ denote the maximal one among these elements.    It is sufficient to show that
 $$
 \sigma < \sigma' \Longrightarrow M(\sigma) < M(\sigma').
 $$
 But this follows from the polynomial property: as we make $D(\sigma)$ larger, the polynomial image of $D(\sigma)$ will become larger, and therefore, $M(\sigma)$  will become strictly larger. See Figure 7 for an illustration.
\end{proof}
\begin{figure}[!htpb]
  \setlength{\unitlength}{1mm}
  \centering
  \includegraphics[width=100mm]{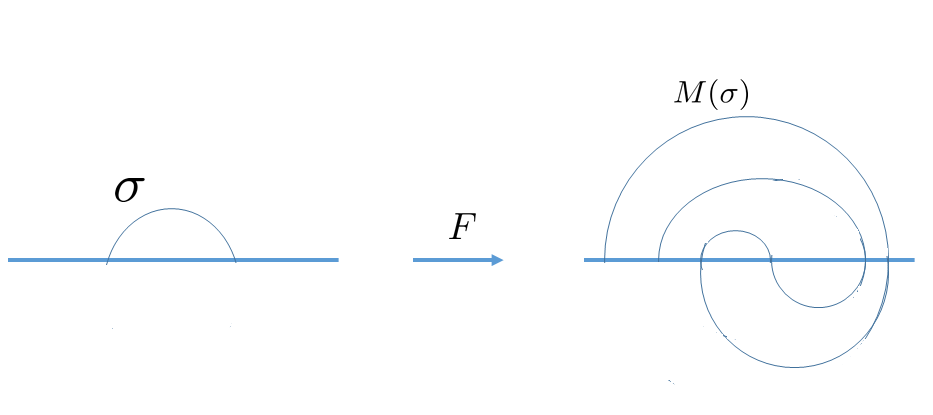}
  \caption{ $\sigma < \sigma' \Longrightarrow M(\sigma) < M(\sigma')$}
  \label{Figure-2}
  \end{figure}

\begin{cor}\label{co}
Suppose $\gamma$ is a non-peripheral curve and $\eta$ is a non-peripheral component of $F^{-1}(\gamma)$. Then for any $x \in P_f$, we have
$$
N(\Sigma_x([\eta]))  \le N(\Sigma_{f(x)}([\gamma]))
$$
\end{cor}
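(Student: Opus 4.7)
The plan is to derive Corollary~\ref{co} as an immediate consequence of the homotopy version of Lemma~\ref{o-o}, with essentially no additional work.

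First I would invoke Lemma~\ref{o-o}, which asserts
$$\sum_{\eta'} N(\Sigma_x([\eta'])) \;\le\; N(\Sigma_{f(x)}([\gamma])),$$
where the sum is taken over all non-peripheral components $\eta'$ of $F^{-1}(\gamma)$. Next I would observe that by its very definition each $N(\Sigma_x([\eta']))$ is the cardinality of a finite set (minimized over representatives in a homotopy class), hence a non-negative integer. Finally, singling out the particular non-peripheral component $\eta$ that is given in the hypothesis and discarding the remaining non-negative summands on the left yields
$$N(\Sigma_x([\eta])) \;\le\; \sum_{\eta'} N(\Sigma_x([\eta'])) \;\le\; N(\Sigma_{f(x)}([\gamma])),$$
which is precisely the inequality claimed.

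Since all the substantive content, namely the polynomial monotonicity property $\sigma<\sigma' \Rightarrow M(\sigma)<M(\sigma')$ and the passage from $N(\Sigma_x(\eta))$ to the homotopy invariant $N(\Sigma_x([\eta]))$, is already packaged inside Lemma~\ref{o-o}, there is no real obstacle. The only thing to be careful about is to use the second (homotopy) form of Lemma~\ref{o-o} rather than the first, so that the minimizations defining $N(\Sigma_x([\eta]))$ and $N(\Sigma_{f(x)}([\gamma]))$ are both already incorporated and one does not have to argue separately that suitable minimizing representatives can be chosen coherently for $\gamma$ and its preimage components.
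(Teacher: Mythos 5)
Your argument is correct and is essentially the only natural derivation: the corollary follows immediately from the "in particular" (homotopy) form of Lemma~\ref{o-o} by dropping the non-negative terms for the other preimage components. The paper states the corollary without a written proof, treating it as exactly this one-line consequence, so your approach matches.
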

Now let us prove the main theorem. Suppose $F$ has an obstruction and let $\Gamma$ be an obstruction guaranteed by  Lemma~\ref{short}. We may assume that it is an irreducible one. That is, for any $\gamma, \eta \in \Gamma$, there is an $n \ge 1$ such that $\eta$ is homotopic to a component of $f^{-n}(\gamma)$.  Now let us prove $\Gamma$ is a Levy cycle.

Claim:  for each $\gamma \in \Gamma$, there exists exactly one non-peripheral component of $F^{-1}(\gamma)$. Suppose this were not true. Then we would have two non-peripheral components  $\gamma_1 \ne \eta_1$ of $F^{-1}(\gamma)$  and two   sequence:
\begin{equation}\label{fs}
\gamma = \gamma_0 \to \gamma_1 \to \gamma_2 \to \cdots \to \gamma_l \to  \gamma_{l+1} = \gamma
\end{equation}  and
\begin{equation}\label{ss}
\gamma = \eta_0 \to \eta_1 \to \eta_2 \to \cdots \to \eta_k \to \eta_{k+1} =  \gamma,
\end{equation}   where $\gamma_{i+1}$ is homotopic to some component of $f^{-1}(\gamma_i)$, $0 \le i \le l+1$,
 and $\eta_{j+1}$ is homotopic to some component of $f^{-1}(\eta_j)$, $0 \le j \le k$.
Since $\gamma \in \Sigma$, we have some  $x \in \mathcal{O}$ such that $N(\Sigma_x([\gamma])) > 0$. Let  $y \in \mathcal{O}$ such that $x  = F(y)$.
Let $p$ be the period of the $x$. Repeating (\ref{fs}) $p$ times,
$$\gamma_0 \to \gamma_1 \to \gamma_2 \to \cdots \to \gamma_l \to  \gamma_{0} \to \cdots \to \gamma_0 \to \gamma_1 \to \gamma_2 \to \cdots \to \gamma_l \to  \gamma_{0}
$$
Apply  Corollary~\ref{co} to the above sequence, we get $$ N(\Sigma_x([\gamma]) \ge   N(\Sigma_y([\gamma_1])) \ge   N(\Sigma_x([\gamma])),$$ which implies that $$N(\Sigma_y([\gamma_1]))  =    N(\Sigma_x([\gamma])).$$  Similarly, we may repeat (\ref{ss}) $p$ times and then apply  Corollary~\ref{co},  we get
$$
N(\Sigma_x([\gamma]))  \ge  N(\Sigma_y([\gamma_1])) \ge  N(\Sigma_x([\gamma])),
$$ which implies that $$N(\Sigma_y([\eta_1]))  =    N(\Sigma_x([\gamma])).$$  But by Lemma~\ref{o-o}  and $\gamma_1 \ne \eta_1$,  we also have
$$
N(\Sigma_x([\gamma])) \ge N(\Sigma_y([\gamma_1])) +  N(\Sigma_y([\eta_1])).
$$
This implies that $N(\Sigma_x([\gamma])) = 0$. This  contradicts the assumption that $\Sigma_x([\gamma]) > 0$.  The Claim has been proved.
\begin{figure}[!htpb]
  \setlength{\unitlength}{1mm}
  \begin{center}

 \includegraphics[width=60mm]{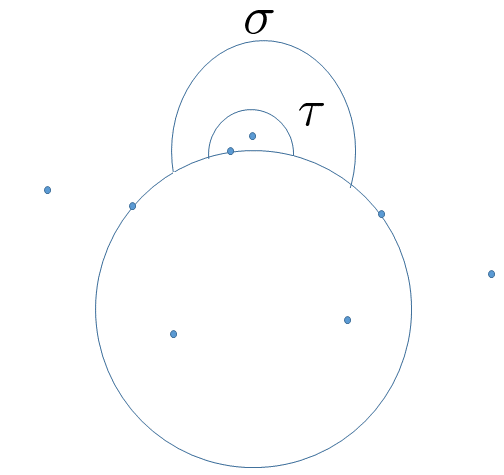}
  \caption{Two homotopic type P curve segments}
  \label{Figure-8}

  \end{center}
  \end{figure}
  Now for any non-peripheral curve $\gamma$, let $K(\gamma)$ denote the number of the $\emph{type P}$ curve segments of $\gamma$ and let
   $$
   K([\gamma]) = \min K(\eta)
   $$ where $\min$ is taken over all the non-peripheral curves which are homotopic to $\gamma$.  From the claim above,  it follows  that
  $\Gamma$ must be a Levy cycle for $F$. Let
   $\Gamma = \{\gamma_1, \cdots, \gamma_n\}$   so that $$K(\gamma_1) =  \min_{1 \le i \le n}  K([\gamma_i])$$ and
    for $1 \le i \le n-1$, $\gamma_{i+1}$ is the unique non-peripheral component of $F^{-1}(\gamma_{i})$.    Since the image of
   a $\emph{type P}$ curve segment contains at least one $\emph{type P}$ curve segment,  we must have
    $$
      K(\gamma_1) =  \cdots =  K(\gamma_n).
    $$
So  all $\emph{type P}$ curve segments of $\gamma_i$, $1 \le i \le n$,  are non-trivial in the sense that   $D(\sigma) \cap P_F \ne \emptyset$.  So for any $\emph{type P}$ curve segment $\sigma$
of $\gamma_{i+1}$, there is a $\emph{type P}$ curve segment $\sigma'$  which is homotopic to $\sigma$ (Figure 8 illustrates  the meaning that two  $\emph{type P}$ curve segments are homotopic),  such that $\sigma'$  is mapped homeomorphically to some $\emph{type P}$ curve segment $\tau$ of $\gamma_i$. We thus
    get a cycle of $\emph{type P}$ curve segments $\{\sigma_i\}$, $1 \le i \le m$ with $n|m$, such that for each $\sigma_i$, there is a $\emph{type P}$ curve segment $\mu_{i+1}$ which is homotopic to $\sigma_{i+1}$ so that $F: \mu_{i+1} \to \sigma_i$ is a homeomorphism. Since $f$ is post-critically finite which  expands the orbifold metric, it follows that $D(\sigma_i)$ contains exactly one point in $P_F$, say $x_i$, which lies in $I_i = D(\sigma_i) \cap \Bbb T$, and moreover, $\{x_i\}$ is a periodic cycle.  But one may then deform  each  $D(\sigma_i)$  into a small neighborhood of $x_i$. In this way $\Gamma = \{\gamma_i\}$ becomes into a Levy cycle of $g$.
 This is impossible.  The proof of the main theorem is completed.

\medskip

  $\bold{Acknowledgements.}$ The author would like to thank    Fei Yang who provides all the computer generating pictures in the paper.

\end{document}